\documentclass[letter]{article}

\usepackage{geometry}

\usepackage{expl3}

\usepackage{amssymb, amsthm, mathtools}
\usepackage[unicode,colorlinks=true,linkcolor=blue,urlcolor=magenta, citecolor=blue]{hyperref}

\usepackage[warnings-off={mathtools-colon,mathtools-overbracket}]{unicode-math}

\newtheorem{theorem}{Theorem}

\newtheorem{lemma}{Lemma}
\newtheorem{corollary}{Corollary}
\theoremstyle{definition}
\newtheorem{definition}{Definition}

\newcommand{\lean}[1]{}
\newcommand{\discussion}[1]{}
\newcommand{\leanok}{}

\ExplSyntaxOn
\NewDocumentCommand{\uses}{m}
 {\clist_map_inline:nn{#1}{\vphantom{\ref{##1}}}\ignorespaces}
\NewDocumentCommand{\proves}{m}
 {\clist_map_inline:nn{#1}{\vphantom{\ref{##1}}}\ignorespaces}
\ExplSyntaxOff

\usepackage{cleveref}

\crefname{definition}{Definition}{Definitions}
\Crefname{definition}{Definition}{Definitions}
\crefname{lemma}{Lemma}{Lemmas}
\Crefname{lemma}{Lemma}{Lemmas}
\crefname{corollary}{Corollary}{Corollaries}
\Crefname{corollary}{Corollary}{Corollaries}

\title{Embedding Finite Functions into Low-Degree Polynomial Functions over Commutative Rings}
\author{
  Roman Bacik \\
  \small Vancouver, Canada \\
}
\date{\today}

\begin{document}
\maketitle
% Blueprint content for the FinBin project.
% This file is shared by the web and print versions; it must not contain
% \begin{document}. All statements carry blueprint labels (\label, \uses, \lean,
% \leanok) binding them to the machine-checked Lean declarations in the `Finbin`
% library.

\begin{abstract}
A function $f \colon X^k \to X$ on a finite set embeds into a polynomial of total degree $d$
over a commutative ring $R$ if there is an injection $j \colon X \to R$ and a polynomial
$g$ of total degree at most $d$ with $j \circ f = g \circ j^k$, where $j^k$ applies $j$ in
each coordinate. These are the
transition functions of $k$-neighbour cellular automata, and the injection $j$ is an
enlargement of the alphabet that preserves the transitions. We prove three results, all
verified in Lean~4 with Mathlib~\cite{bacik2026finbin}. Every unary function $f \colon X \to X$ embeds into a
polynomial of total degree $1$. Every binary Kronecker delta embeds into a polynomial of
total degree $4$. For every $d$ there is a binary function that does not embed into any
polynomial of total degree $d$.
\end{abstract}

\section{Introduction}

A cellular automaton evolves a configuration over an alphabet $X$ by applying, at every
cell at once, a fixed local rule that reads $k$ neighbouring cells. The rule is a transition
function $f \colon X^k \to X$, and the automaton is determined by $f$ and its neighbourhood.
Smith~\cite{Smith1971} proved that even two-neighbour cellular automata can effectively
emulate Turing-machine computation.

We represent such a function algebraically. Fix an injection $j \colon X \to R$ into a
commutative ring $R$ and require that $f$ become the restriction along $j$ of a polynomial
map $R^k \to R$, i.e.\ that the square of \Cref{def:embed} commute. The injection relabels
the alphabet by elements of $R$; this is an enlargement of the alphabet, which changes
neither the transitions nor the computational power of the automaton.

Over a finite field every function is a polynomial function, so every $f$ is represented by
some polynomial. For $k = 1$ every $f \colon X \to X$ is represented in degree $1$
(\Cref{thm:unary}). For $k = 2$ every Kronecker delta is represented in degree $4$
(\Cref{thm:quartic}), while for every $d$ there is a binary function represented by no
polynomial of total degree $d$ (\Cref{cor:arbitrary}); the inversion indicator on
$\mathbb{Z}/p$ for a prime $p > d$ is such a function (\Cref{thm:obstruction}), and the
lower bound holds over every commutative ring, not only over $\mathbb{Z}/p$.

\Cref{sec:unary,sec:quartic} give the two constructions and \Cref{sec:obstruction} the lower
bound, after \Cref{sec:prelim} fixes notation.

\section{Preliminaries}\label{sec:prelim}

Throughout, $R$ denotes a commutative ring and $\mathbb{Z}/n$ the ring of integers modulo
$n$. We write $R[X_1, \dots, X_k]$ for the polynomial ring in $k$ variables and $\deg g$ for
the total degree of $g$, and identify each $g \in R[X_1,\dots,X_k]$ with the function
$R^k \to R$, $(r_1,\dots,r_k) \mapsto g(r_1,\dots,r_k)$, that it induces. For a map
$j \colon X \to R$ we write $j^k \colon X^k \to R^k$ for its coordinatewise extension
$j^k(x_1,\dots,x_k) = (j x_1,\dots,j x_k)$.

\begin{definition}[Polynomial embedding]\label{def:embed}\lean{Finbin.EmbedsInDegree}\leanok
Let $X$ be finite and $f \colon X^k \to X$. We say $f$ \emph{embeds in degree $d$} if there
exist a commutative ring $R$, an injection $j \colon X \to R$, and a polynomial
$g \in R[X_1,\dots,X_k]$ with $\deg g \le d$ such that $j \circ f = g \circ j^k$, i.e.\
the square
\[
  \begin{array}{ccc}
    X^k & \xrightarrow{\;f\;} & X \\[2pt]
    {\scriptstyle j^k}\big\downarrow & & \big\downarrow{\scriptstyle j} \\[2pt]
    R^k & \xrightarrow{\;g\;} & R
  \end{array}
\]
commutes.
\end{definition}

\begin{lemma}[Degree monotonicity]\label{lem:mono}
\lean{Finbin.EmbedsInDegree.mono}\leanok\uses{def:embed}
If $f$ embeds in degree $d$ and $d \le d'$, then $f$ embeds in degree $d'$.
\end{lemma}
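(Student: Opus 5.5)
We reuse the witnesses from the hypothesis without change. Since $f$ embeds in degree $d$, \Cref{def:embed} gives a commutative ring $R$, an injection $j \colon X \to R$, and a polynomial $g \in R[X_1,\dots,X_k]$ with $\deg g \le d$ and $j \circ f = g \circ j^k$. We take the same triple $(R, j, g)$ as the witness that $f$ embeds in degree $d'$.

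Two things must be checked. The commuting square $j \circ f = g \circ j^k$ does not mention the degree bound, so it holds as before. For the degree condition, $\deg g \le d$ and $d \le d'$ give $\deg g \le d'$ by transitivity of $\le$ on $\mathbb{N}$. Hence $(R, j, g)$ satisfies every clause of \Cref{def:embed} with $d'$ in place of $d$.

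There is no real obstacle here. The only point needing care is a convention: the zero polynomial, whose total degree may be taken as $0$ or $-\infty$. Under either convention, a bound $\deg g \le d$ still implies $\deg g \le d'$, so the argument is unaffected. In Lean this amounts to destructuring the existential and applying \texttt{le\_trans} to the degree bound.
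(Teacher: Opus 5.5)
Your proof is correct and is exactly the paper's argument: the same triple $(R, j, g)$ witnesses the embedding in degree $d'$, since the commuting square is unchanged and $\deg g \le d \le d'$.
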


\begin{proof}\leanok
The same $R$, $j$, and $g$ satisfy $\deg g \le d \le d'$.
\end{proof}

\begin{definition}[Kronecker delta]\label{def:kron}\lean{Finbin.kroneckerDelta}\leanok
For $a, b \in \mathbb{Z}/n$ the binary Kronecker delta
$\delta_{a,b} \colon (\mathbb{Z}/n)^2 \to \mathbb{Z}/n$ is
\[
  \delta_{a,b}(x,y) = \begin{cases} 1 & (x,y) = (a,b), \\ 0 & \text{otherwise.}\end{cases}
\]
More generally, for a point $a = (a_1,\dots,a_k)$ the $k$-ary delta $\delta_a$ takes the
value $1$ at $a$ and $0$ at every other point.
\end{definition}

\begin{definition}[Inversion indicator]\label{def:inv}\lean{Finbin.invIndicator}\leanok
For a natural number $p$ the inversion indicator
$\iota_p \colon (\mathbb{Z}/p)^2 \to \mathbb{Z}/p$ is $\iota_p(x,y) = 1$ if $xy = 1$ and $0$
otherwise.
\end{definition}

\section{Linear embedding of unary functions}\label{sec:unary}

\begin{theorem}[Linear representation]\label{thm:unary-lin}
\lean{Finbin.linear_representation}\leanok
For every $f \colon \mathbb{Z}/n \to \mathbb{Z}/n$ there are $m > 0$, $a \in \mathbb{Z}/m$
and an injection $j \colon \mathbb{Z}/n \to \mathbb{Z}/m$ with $j(f(x)) = a\,j(x)$ for all
$x$.
\end{theorem}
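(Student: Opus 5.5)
Given $f\colon \mathbb{Z}/n\to\mathbb{Z}/n$, we will look for $m$, $a$ and an injection $j$ with $j\circ f = a\cdot j$. The idea is to make multiplication by $a$ act on a set of "labels" exactly as $f$ acts on $\mathbb{Z}/n$. Multiplication by a unit is a permutation, which cannot model a non-injective $f$. So we cannot use a field of prime order directly; we need a ring where multiplication by $a$ can collapse distinct elements. The plan is to work in $\mathbb{Z}/m$ with $m = q^N$ a large power of a prime, or more flexibly with $m$ a product of coprime factors via the Chinese remainder theorem. Each coordinate of $j$ then only needs to be compatible with $f$, and injectivity can be obtained from one coordinate or from the product of all coordinates.

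The concrete construction uses the functional graph of $f$. It decomposes into components, each consisting of one cycle (of length $\ell$) with in-trees hanging off it. The key steps are:

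\begin{itemize}
\item For each component $C$ choose a prime $q_C$ with $\ell \mid q_C-1$, which exists by Dirichlet's theorem. Alternatively, and more elementarily, use $\mathbb{Z}/q$ for any prime $q$ with $\ell \mid q-1$.
\item Pick a unit $u$ of order $\ell$ modulo $q$, and let $h$ be the height of the tallest tree in $C$. Work modulo $q^{h+1}$, take the multiplier to be $a_C = q\,u'$ for a suitable unit $u'$, and map a point at depth $t$ (distance $t$ from the cycle) to a value of $q$-adic valuation $h-t$.
\item Then multiplication by $q$ raises the valuation by one, matching one step of $f$ towards the cycle. But the cycle points would need valuation $h$ and to be permuted by $a_C$, while multiplication by $q$ kills valuation $h$ modulo $q^{h+1}$. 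So this scaling fails, and we fix it by splitting each component over two coprime factors.
\end{itemize}

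The splitting works as follows. In a factor $\mathbb{Z}/q$ with $a\equiv u$ of order $\ell$, send $x$ to $u^{\phi(x)}$, where $\phi(x)$ is the index modulo $\ell$ of the cycle point eventually reached, adjusted by depth. Precisely, set $\phi(x) = c(x) - t(x) \bmod \ell$, where $f^{t(x)}(x)$ is cycle point number $c(x)$; then $\phi(f(x)) = \phi(x)+1$. In a second factor $\mathbb{Z}/r^{H}$, with $a\equiv r$, encode the tree structure. This means we need $r\cdot j_2(x) = j_2(f(x))$, with cycle points sent to $0$. So tree points need $j_2(x)$ with $r\,j_2(x) = j_2(f(x))$, which is solvable when $j_2(f(x))$ is divisible by $r$. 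Building the trees top down from the cycle (where $j_2=0$), we choose preimages among the $r$ solutions $j_2(f(x))/r + s\,r^{H-1}$. These solutions are distinct, so siblings can receive distinct labels provided $r$ exceeds the maximal in-degree. Taking $H$ larger than the maximal depth keeps the valuations consistent. The point of this is to distinguish different depths and different siblings. Points on the same cycle, or in different components, are separated by the first factor, using distinct primes or extra coordinates per component (setting a component's coordinate to $1$ or $0$ on other components, with $a\equiv 1$ there, is compatible since $1\cdot 1=1$ and $1\cdot 0 = 0$). Finally, combine all factors by the Chinese remainder theorem into a single $\mathbb{Z}/m$ and a single $a$.

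The main obstacle is verifying injectivity of the combined $j$. Two points are separated as follows:
\begin{itemize}
\item in different components, by the indicator coordinates;
\item in the same component with different depths or tree branches, by the $r$-adic coordinate, whose choices of distinct lifts separate siblings and whose valuation separates depths;
\item on the same cycle, by $u^{\phi}$, since $u$ has order exactly $\ell$.
\end{itemize}
Some care is needed for tree points whose $r$-adic coordinate agrees with that of a point elsewhere; this is handled by making all lifts nonzero and distinct via the $s\,r^{H-1}$ freedom. A further detail is the existence of primes $q\equiv 1 \pmod \ell$. To avoid Dirichlet, one can instead use $\mathbb{Z}/(2^\ell-1)$ with $u=2$, which has order exactly $\ell$ whenever $\ell\ge 2$; the case $\ell=1$ is trivial.
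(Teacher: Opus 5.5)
Your construction is essentially the paper's proof: per component, a factor $\mathbb{Z}/p$ with $p\equiv 1\pmod{\ell}$ carrying $\zeta^{\mathrm{pos}-\mathrm{depth}}$, and a factor $\mathbb{Z}/q^{N}$ with multiplier $q$ whose base-$q$ digits record sibling indices along the path to the cycle, glued by the Chinese remainder theorem. The paper's explicit $b(x)=\sum_k(\mathrm{sig}(f^{[k]}x)+1)\,q^{N-1-k}$ is one particular choice of your lifts, with $s=\mathrm{sig}(x)+1$. The paper separates components using that the root-of-unity coordinate is a unit while all other components' coordinates are $0$, so your indicator coordinates are unnecessary though harmless.

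One caution about your Dirichlet-free aside: the moduli $2^{\ell}-1$ need not be pairwise coprime across components (e.g.\ $3\mid 15$ for $\ell=2,4$), so they do not combine by the Chinese remainder theorem as stated. Keep distinct primes, as in your main line.
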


\begin{proof}\leanok
Set $N = n$. Iterating $f$ partitions $\mathbb{Z}/n$ into components, each a cycle with trees
feeding into it. For $x \in \mathbb{Z}/n$ let $\mathrm{depth}(x)$ be the least $d$ with
$f^{[d]}(x)$ on a cycle, $\mathrm{root}(x) = f^{[\mathrm{depth}(x)]}(x)$ the cycle point it
reaches, and $\pi(x)$ the length of that cycle; since $\mathbb{Z}/n$ has $n$ elements,
$\mathrm{depth}(x) \le n = N$. Each component has a distinguished cycle point $c$ (say the
least), and for a cycle point $z$ let $\mathrm{pos}(z) \in \{0,\dots,\pi-1\}$ be the number of
$f$-steps from $c$ to $z$. Let $\mathrm{sig}(x)$ be the position of $x$ in the list of
$f$-preimages of $f(x)$; then $\mathrm{sig}(x) < n$, and
\begin{equation}\label{eq:sig}
  f(x) = f(y) \ \text{and}\ \mathrm{sig}(x) = \mathrm{sig}(y) \implies x = y. \tag{$\ast$}
\end{equation}

Fix one component, with cycle length $\pi$ and distinguished point $c$. By Dirichlet's theorem
choose distinct primes $p, q > n$ with $p \equiv 1 \pmod{\pi}$; since $\pi \mid p-1$, the group
$(\mathbb{Z}/p)^\times$ contains a primitive $\pi$-th root of unity $\zeta$. Define, for $x$ in
the component,
\[
  a(x) = \zeta^{\,\mathrm{pos}(\mathrm{root}(x)) - \mathrm{depth}(x)} \in \mathbb{Z}/p,
  \qquad
  b(x) = \sum_{k=0}^{\mathrm{depth}(x)-1} \bigl(\mathrm{sig}(f^{[k]}(x)) + 1\bigr)\,
         q^{\,N-1-k} \in \mathbb{Z}/q^{N}.
\]
As $0 \le \mathrm{sig}(\cdot)+1 \le n < q$, the integer $b(x)$ has base-$q$ digits less than
$q$, and $b(x) < q^{N}$.

\emph{Linearity.} Applying $f$ stays in the component. If $x$ is off the cycle then
$\mathrm{root}(f x) = \mathrm{root}(x)$ and $\mathrm{depth}(f x) = \mathrm{depth}(x) - 1$, so
$a(f x) = \zeta^{\,\mathrm{pos}(\mathrm{root} x) - \mathrm{depth}(x) + 1} = \zeta\, a(x)$. If
$x$ is on the cycle then $\mathrm{depth}(x) = 0$, $\mathrm{root}(x) = x$, and
$\mathrm{pos}(\mathrm{root}(f x)) \equiv \mathrm{pos}(\mathrm{root}(x)) + 1 \pmod{\pi}$; as
$\zeta^{\pi} = 1$, again $a(f x) = \zeta\, a(x)$. For $b$, reindexing the sum gives
$q\, b(x) = (\mathrm{sig}(x)+1)\,q^{N} + b(f x)$, hence $b(f x) = q\, b(x)$ in
$\mathbb{Z}/q^{N}$.

\emph{Combination.} Since $\gcd(p, q^{N}) = 1$, the Chinese remainder theorem is a ring
isomorphism $\mathbb{Z}/(p\,q^{N}) \cong \mathbb{Z}/p \times \mathbb{Z}/q^{N}$. Let $j_0(x)$
correspond to $(a(x), b(x))$ and $\mu$ to $(\zeta, q)$. By linearity
$j_0(f x) = \mu\, j_0(x)$, and $a(x)$ is a power of $\zeta$, hence a unit, so $j_0(x) \ne 0$.

\emph{Injectivity on the component.} Suppose $j_0(x) = j_0(y)$, i.e.\ $a(x) = a(y)$ and
$b(x) = b(y)$. The least nonzero base-$q$ digit of $b(x)$ is at position
$N - \mathrm{depth}(x)$, so $\mathrm{depth}(x) = \mathrm{depth}(y) =: d$, and equality of the
digits gives $\mathrm{sig}(f^{[k]}x) = \mathrm{sig}(f^{[k]}y)$ for $0 \le k < d$. From
$a(x) = a(y)$ and equal depth, $\zeta^{\mathrm{pos}(\mathrm{root} x)} =
\zeta^{\mathrm{pos}(\mathrm{root} y)}$; as $\zeta$ has order $\pi$ and $\mathrm{pos} < \pi$,
$\mathrm{pos}(\mathrm{root} x) = \mathrm{pos}(\mathrm{root} y)$, so within this one component
$\mathrm{root}(x) = \mathrm{root}(y)$. Then $f^{[d]}x = \mathrm{root}(x) = \mathrm{root}(y) =
f^{[d]}y$, and downward induction on $k = d-1,\dots,0$, using $f^{[k+1]}x = f^{[k+1]}y$ with
$\mathrm{sig}(f^{[k]}x) = \mathrm{sig}(f^{[k]}y)$ in \eqref{eq:sig}, gives $f^{[k]}x =
f^{[k]}y$; at $k = 0$, $x = y$.

\emph{Globalisation.} Carry this out for every component, choosing the primes so that
$r \mapsto p_r$ and $r \mapsto q_r$ are injective and $\{p_r\} \cap \{q_s\} = \varnothing$;
then the moduli $p_r q_r^{N}$ are pairwise coprime, and with $m = \prod_r p_r q_r^{N}$ the
Chinese remainder theorem gives $\mathbb{Z}/m \cong \prod_r \mathbb{Z}/(p_r q_r^{N})$. Let
$j(x)$ be the element whose coordinate at the component $r$ of $x$ is $j_0(x)$ and whose other
coordinates are $0$, and let $a$ be the element with coordinate $\mu_r$ at every $r$. Since $f$
preserves components, $j(f x) = a\, j(x)$ coordinatewise. If $j(x) = j(y)$, the unique nonzero
coordinate (nonzero because $j_0 \ne 0$) identifies the common component, and injectivity on
it gives $x = y$. Thus $m > 0$, $a \in \mathbb{Z}/m$, and $j \colon \mathbb{Z}/n \to
\mathbb{Z}/m$ is injective with $j(f(x)) = a\, j(x)$ for all $x$.
\end{proof}

An alternative proof, using the adjugate of the characteristic matrix of the functional
graph of $f$, is given in \cite{bacik2025linear}.

\begin{theorem}[Unary linear embedding]\label{thm:unary}
\lean{Finbin.embedsInDegree_unary}\leanok\uses{def:embed,thm:unary-lin}
Every function $f \colon \mathbb{Z}/n \to \mathbb{Z}/n$ embeds in degree $1$.
\end{theorem}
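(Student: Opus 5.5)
This follows directly from \Cref{thm:unary-lin}, which supplies exactly the commuting square of \Cref{def:embed} in the case $k=1$. Given $f \colon \mathbb{Z}/n \to \mathbb{Z}/n$, I will invoke \Cref{thm:unary-lin} to obtain $m > 0$, an element $a \in \mathbb{Z}/m$, and an injection $j \colon \mathbb{Z}/n \to \mathbb{Z}/m$ with $j(f(x)) = a\,j(x)$ for all $x$. I will then take $R = \mathbb{Z}/m$, which is a commutative ring, with this $j$ as the required injection.

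For the polynomial I will take $g = a X_1 \in R[X_1]$, the monomial with coefficient $a$. Its total degree is at most $1$: it equals $1$ when $a \neq 0$, and $g$ is the zero polynomial when $a = 0$. In either case $\deg g \le 1$ holds, so no case distinction is needed beyond this remark.

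Evaluating $g$ gives $g(j^1(x)) = g(j(x)) = a\,j(x) = j(f(x))$. Hence $j \circ f = g \circ j^1$, which is the commuting square of \Cref{def:embed}.

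There is no real obstacle here. The only care needed is the bookkeeping of identifying $(\mathbb{Z}/n)^1$ with $\mathbb{Z}/n$, so that the unary $f$ and the map $j^1$ match the definition, and of confirming that the degree convention treats the zero polynomial as having degree at most $1$. All the substantive work is already contained in \Cref{thm:unary-lin}.
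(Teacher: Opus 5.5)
Your proposal is correct and is essentially the paper's proof: you invoke \Cref{thm:unary-lin}, take $R = \mathbb{Z}/m$ and $g = a\,X_1$ with $\deg g \le 1$, and check $g(j(x)) = a\,j(x) = j(f(x))$. Your side remarks on the zero polynomial and on identifying $(\mathbb{Z}/n)^1$ with $\mathbb{Z}/n$ are harmless extra care.
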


\begin{proof}\leanok\uses{thm:unary-lin}
Take $m$, $a$, $j$ as in \Cref{thm:unary-lin}, $R = \mathbb{Z}/m$, and $g = a\,X_1$, so
$\deg g \le 1$ and $g(r) = a\,r$. Then $g(j(x)) = a\,j(x) = j(f(x))$, i.e.\
$j \circ f = g \circ j^1$.
\end{proof}

\section{Degree-four embedding of binary Kronecker deltas}\label{sec:quartic}

\begin{theorem}[Quartic representation]\label{thm:quartic-rep}
\lean{Finbin.quartic_d}\leanok\uses{def:kron}
For every $n \ge 1$ and all $a, b \in \mathbb{Z}/n$ there exist a modulus $m$, an injection
$j \colon \mathbb{Z}/n \to \mathbb{Z}/m$, and $g \in (\mathbb{Z}/m)[X_1,X_2]$ of total
degree at most $4$ with $j(\delta_{a,b}(x,y)) = g(j(x),\, j(y))$ for all $x,y$.
\end{theorem}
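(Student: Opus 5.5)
The plan is to take $m$ squarefree, $m=p_1\cdots p_r$ for distinct large primes, and use the Chinese remainder theorem $\mathbb{Z}/m\cong\prod_i\mathbb{Z}/p_i$. A polynomial with coefficients in $\mathbb{Z}/m$ acts coordinatewise: in coordinate $i$ it is an arbitrary polynomial $g_i\in\mathbb{F}_{p_i}[X_1,X_2]$, and by CRT on coefficients these $g_i$ can be chosen independently. Writing $j(x)=(u_i(x))_i$, the statement reduces to two requirements. First, for every $i$ there is $g_i$ of degree $\le 4$ with
\[
  g_i\bigl(u_i(x),u_i(y)\bigr)=u_i\bigl(\delta_{a,b}(x,y)\bigr)\quad\text{for all }x,y.
\]
Second, the vectors $(u_i(x))_i$ are pairwise distinct. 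Coordinates are thereby split into two kinds, handled separately.

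\emph{Tag coordinates.} For each $x\in\mathbb{Z}/n$ I add a coordinate with $u(0)=u(1)=0$ and $u(z)=[z=x]$ for $z\notin\{0,1\}$. The output of $\delta_{a,b}$ is always $0$ or $1$, both encoded by $0$, so $g=0$ works (degree $0$). These coordinates separate all $x\notin\{0,1\}$ from each other and from $\{0,1\}$. It remains to separate $0$ from $1$, and this is done by the main coordinates.

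\emph{Main coordinates.} Here $u(0)\ne u(1)$ is required, and I look for $g$ of the shape
\[
  g(X,Y)=u(0)+\bigl(u(1)-u(0)\bigr)\,\varphi(X)\,\psi(Y),
\]
where $\varphi$ is a polynomial of degree $\le 2$ that, on the set $\{u(x)\}$, equals $1$ at $u(a)$ and $0$ elsewhere, and $\psi$ is the analogous indicator of $u(b)$. Then $\deg g\le 4$, and $g$ is correct exactly when $u(a)$ is attained only by $x=a$ and $u(b)$ only by $y=b$. By Lagrange interpolation this holds whenever $u$ takes at most three distinct values in a field of size $\ge 3$; one may take $p>n$.

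The work is a case analysis on how $a,b$ meet $\{0,1\}$, choosing $u$ with $\le 3$ values such that
\begin{itemize}
\item $u(0)\ne u(1)$,
\item the fibre of $u(a)$ is $\{a\}$, and
\item the fibre of $u(b)$ is $\{b\}$.
\end{itemize}
Two cases are immediate.
\begin{itemize}
\item $a=0$, $b=1$: use the values $\{0,1,2\}$, with every other element sent to $2$.
\item $a=b\notin\{0,1\}$: send $a$ to its own value, $0$ and $1$ to the remaining two values, and every other element to the same value as $0$.
\end{itemize}
When $a,b,0,1$ are four distinct elements, a single coordinate would need four values. The plan there is to use a product structure instead. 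One coordinate encodes $u(0)\ne u(1)$ and has $u(a)$ with singleton fibre. Another encodes the $b$-test with singleton fibre. Since a polynomial cannot mix coordinates, I would instead try to choose $u$ with values $\{0,1,t\}$ where $u(a)=u(b)=t$ and $u(0)=0$, $u(1)=1$, all others $0$.

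That choice does not directly yield the indicator, because $x=b$, $y=a$ also gives $(t,t)$. It does work if we instead use the identity $\delta_{a,b}=\delta_a(x)\delta_b(y)$ computed in two separate coordinates. Each of these coordinates must still satisfy the output constraint, which is the difficulty.

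The main obstacle is therefore this four-distinct-point case. There the output constraint $u(0)\ne u(1)$ competes with the two singleton-fibre requirements under the three-value budget forced by degree $2$ per variable. What I would try first is to let $u(0)$ and $u(1)$ be the two values other than $u(a)$ in the $X$-test and $u(b)$ in the $Y$-test. For this, $u(a)$ and $u(b)$ must coincide as field values but be distinguished by an auxiliary symmetric-breaking factor.

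Concretely, I would replace $\varphi(X)\psi(Y)$ by a degree-$4$ polynomial vanishing on all of $U\times U$ except the single point $(u(a),u(b))$ with $u(a)\ne u(b)$, for $|U|=4$. By counting monomials of degree $\le 4$ (there are $15$) against $16$ grid points, I would verify that a suitable choice of the four values makes this interpolation solvable. If it fails, I would fall back to the coordinate-splitting above. Once each case is settled, $g$ is assembled by CRT, and injectivity follows from the tag and main coordinates together.
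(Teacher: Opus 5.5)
\paragraph{A genuine gap in the four-distinct-point case.}
Your CRT framework works whenever $|\{0,1,a,b\}|\le 3$. Tag coordinates, together with one prime coordinate on which $u$ takes at most three values and $g=u(0)+(u(1)-u(0))\varphi(X)\psi(Y)$, do represent $\delta_{a,b}$ in degree $\le 4$.

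The case you single out, with $0,1,a,b$ pairwise distinct (e.g.\ $\delta_{2,3}$ with $n\ge 4$), is not merely hard for a squarefree modulus: it is impossible. Since $j(0)\ne j(1)$, some prime coordinate $\mathbb{F}_p$ has $u(0)\ne u(1)$. By your own fibre argument, $u(0),u(1),u(a),u(b)$ are then four distinct elements of $U=u(\mathbb{Z}/n)$. So if $g_p$ is the image of $g$ in $\mathbb{F}_p[X,Y]$, the polynomial $(g_p-u(0))/(u(1)-u(0))$, of total degree $\le 4$, must equal on $U\times U$ the indicator of the single point $(u(a),u(b))$.

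Over a field, reducing a polynomial modulo $\prod_{v\in U}(X-v)$ and $\prod_{v\in U}(Y-v)$ never raises total degree. The unique representative of that indicator with $\deg_X,\deg_Y<|U|$ is $L_{u(a)}(X)\,L_{u(b)}(Y)$, a product of Lagrange basis polynomials, of total degree $2(|U|-1)\ge 6$.

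Consequences for your plan:
\begin{itemize}
\item No choice of four values makes your degree-$4$ interpolation solvable; the count $15<16$ already points the wrong way.
\item The fallback of computing $\delta_a(x)$ and $\delta_b(y)$ in separate coordinates cannot work either. A polynomial over $\prod_i\mathbb{F}_{p_i}$ acts coordinatewise and cannot multiply values from different coordinates, so the coordinate with $u(0)\ne u(1)$ must compute the whole indicator on its own.
\end{itemize}
Hence with squarefree $m$ this case has no degree-$4$ representation at all.

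\paragraph{The missing idea.}
You need to give up reducedness and use zero divisors of a non-squarefree modulus, so that off-target points are killed by divisibility rather than by Lagrange factors. The paper does this for $\delta_{2,3}$ with $R=\mathbb{Z}/(16n)$, $j(2)=1$, $j(3)=2$, $j(1)=8n$, $j(x)=4\bar x$ otherwise, and $g=4n\,XY(Y-1)$, of degree $3$.
\begin{itemize}
\item At $(2,3)$ one gets $8n=j(1)$.
\item At every other point, $4$ divides $j(x)\,j(y)(j(y)-1)$: either $y\ne 3$, or $y=3$ and $j(x)$ is even. So $g$ vanishes modulo $16n$.
\end{itemize}
Any $\delta_{a,b}$ with $0,1,a,b$ distinct then follows by precomposing $j$ with a permutation of $\mathbb{Z}/n$ that fixes $0$ and $1$ and sends $a,b$ to $2,3$.
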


\begin{proof}\leanok
Throughout, $\bar x \in \{0,\dots,n-1\}$ denotes the representative of $x \in \mathbb{Z}/n$.

\emph{Three degree-preserving reductions.} Let $(j,g)$ over $R = \mathbb{Z}/m$ represent
$\delta_{a,b}$.
\begin{enumerate}
\item[(i)] $(j,\, g(Y,X))$ represents $\delta_{b,a}$.
\item[(ii)] If $\sigma$ is a permutation of $\mathbb{Z}/n$ with $\sigma 0 = 0$ and
$\sigma 1 = 1$, then $(j \circ \sigma^{-1},\, g)$ represents $\delta_{\sigma a, \sigma b}$:
indeed $\delta_{\sigma a,\sigma b}(x,y) = \delta_{a,b}(\sigma^{-1}x, \sigma^{-1}y)$, and since
$\sigma$ fixes the outputs $0,1$, $(j\circ\sigma^{-1})(\delta_{\sigma a,\sigma b}(x,y)) =
j(\delta_{a,b}(\sigma^{-1}x,\sigma^{-1}y)) = g((j\circ\sigma^{-1})x,(j\circ\sigma^{-1})y)$.
\item[(iii)] For the transposition $\tau = (0\,1)$, the pair
$(j \circ \tau,\ (j 0 + j 1) - g)$ represents $\delta_{\tau a, \tau b}$: here
$\delta_{a,b}(\tau x,\tau y) = \delta_{\tau a,\tau b}(x,y)$ and $\tau$ swaps the outputs
$0,1$, so $(j 0 + j 1) - g(j\tau x, j\tau y) = (j 0 + j 1) - j(\delta_{\tau a,\tau b}(x,y))
= j(\tau(\delta_{\tau a,\tau b}(x,y))) = (j\circ\tau)(\delta_{\tau a,\tau b}(x,y))$.
\end{enumerate}
The polynomial keeps its total degree in each case.

\emph{Reduction to base forms.} A permutation fixing $0$ and $1$ sends any element of
$\mathbb{Z}/n \setminus \{0,1\}$ to any other. Hence by (i)--(iii) every $\delta_{a,b}$
reduces, in the same degree, to one of the following. If $a = b$: to $\delta_{0,0}$ or
$\delta_{2,2}$ (carry $a$ to $0$; to $1$, then apply (iii); or to $2$). If $a \ne b$: to
$\delta_{0,1}$ when $\{a,b\} = \{0,1\}$; to $\delta_{1,2}$, and to $\delta_{0,2}$ from it by
(iii), when exactly one of $a,b$ lies in $\{0,1\}$; and to $\delta_{2,3}$ when
$a,b \notin \{0,1\}$. The representatives $2,3$ need $n \ge 3$ and $n \ge 4$; for the finitely
many smaller $n$ each delta is represented directly.

\emph{Base constructions.} In each case below $j$ is injective and $\deg g \le 4$; one checks
$g(j x, j y) = j(\delta(x,y))$ by evaluating at the marked point and noting that elsewhere the
product is a multiple of the modulus, hence $0$ in $R$.
\begin{itemize}
\item $\delta_{0,0}$, degree $2$: $R = \mathbb{Z}/(n(n+1))$, $j(0) = 0$ and
$j(x) = \bar x(n+1) - 1$ for $x \ne 0$, $g = n\,(X+1)(Y+1)$; then $g(j0,j0) = n = j(1)$.
\item $\delta_{2,2}$, degree $2$ ($n \ge 3$): $R = \mathbb{Z}/(4n)$, $j(2) = 1$, $j(1) = 2n$,
$j(x) = 2\bar x$ otherwise, $g = 2n\,XY$; then $g(j2,j2) = 2n = j(1)$.
\item $\delta_{1,2}$, degree $3$: with $m = n$ if $n$ is odd and $m = n+1$ otherwise (so $m$
is odd), $R = \mathbb{Z}/(4m)$, $j(1) = 2m$, $j(2) = 3$, $j(x) = 4\bar x$ otherwise,
$g = m\,X(X+1)Y$; then $g(j1,j2) = 6m^2(2m+1) \equiv 2m = j(1) \pmod{4m}$, using that $m$ is
odd.
\item $\delta_{2,3}$, degree $3$ ($n \ge 4$): $R = \mathbb{Z}/(16n)$, $j(2) = 1$, $j(3) = 2$,
$j(1) = 8n$, $j(x) = 4\bar x$ otherwise, $g = 4n\,XY(Y-1)$; then $g(j2,j3) = 8n = j(1)$.
\item $\delta_{0,1}$, degree $4$: with $k = n^3 + n^2 + 1$, $R = \mathbb{Z}/(nk)$, $j(0) = 1$,
$j(1) = 1+n$, $j(x) = k\bar x$ otherwise, $g = 1 + n\,XY\,(X-(1+n))(Y-1)$. The second term
vanishes off $(0,1)$ — $Y-1$ kills $y = 0$, $X-(1+n)$ kills $x = 1$, and if $x \notin\{0,1\}$
or $y \notin\{0,1\}$ then $nX$ or $nY$ is a multiple of $nk$ — and at $(0,1)$ it equals
$n\cdot 1\cdot(1+n)(-n)\,n = -(n^4+n^3) = n - nk \equiv n$, so $g(j0,j1) = 1+n = j(1)$.
\end{itemize}
With the reductions, every $\delta_{a,b}$ is represented in degree at most $4$.
\end{proof}

\begin{theorem}[Quartic delta embedding]\label{thm:quartic}
\lean{Finbin.embedsInDegree_kronecker}\leanok\uses{def:embed,thm:quartic-rep}
For every $n \ge 1$ and all $a, b \in \mathbb{Z}/n$, the binary Kronecker delta
$\delta_{a,b}$ embeds in degree $4$.
\end{theorem}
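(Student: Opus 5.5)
This follows from \Cref{thm:quartic-rep} in the same way that \Cref{thm:unary} follows from \Cref{thm:unary-lin}; the work is just unpacking \Cref{def:embed}. Fix $n \ge 1$ and $a, b \in \mathbb{Z}/n$, and take $X = \mathbb{Z}/n$, which is finite, with $k = 2$. Apply \Cref{thm:quartic-rep} to get a modulus $m$, an injection $j \colon \mathbb{Z}/n \to \mathbb{Z}/m$, and a polynomial $g \in (\mathbb{Z}/m)[X_1,X_2]$ of total degree at most $4$ such that $j(\delta_{a,b}(x,y)) = g(j(x), j(y))$ for all $x, y$.

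Set $R = \mathbb{Z}/m$, which is a commutative ring. We keep the injection $j$ and the polynomial $g$. By the convention in \Cref{sec:prelim}, $j^2(x,y) = (j x, j y)$, so the identity above says exactly that $j \circ \delta_{a,b} = g \circ j^2$ on $(\mathbb{Z}/n)^2$. Since also $\deg g \le 4$, the square of \Cref{def:embed} commutes with $d = 4$.

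There is no real obstacle, only two small bookkeeping points. First, the ring $R = \mathbb{Z}/m$ should be a legitimate commutative ring. Even in the degenerate case $m = 0$ or $m = 1$ it is still a commutative ring ($\mathbb{Z}$ or the zero ring), and injectivity of $j$ already rules out the zero ring when $n \ge 2$. So nothing extra needs checking. Second, the statement of \Cref{thm:quartic-rep} may give degree strictly below $4$ for some of the base cases. That is harmless, because the bound is $\deg g \le 4$; if needed, one can appeal to \Cref{lem:mono}.
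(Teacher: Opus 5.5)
Your proof is correct and is essentially the paper's: instantiate \Cref{def:embed} with $R = \mathbb{Z}/m$ and the $j$, $g$ supplied by \Cref{thm:quartic-rep}, so that $j \circ \delta_{a,b} = g \circ j^2$ and $\deg g \le 4$. Your bookkeeping remarks about degenerate $m$ and the appeal to \Cref{lem:mono} are harmless but unnecessary, since \Cref{thm:quartic-rep} already gives the bound $\deg g \le 4$.
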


\begin{proof}\leanok\uses{thm:quartic-rep}
Take $m$, $j$, and $g \in (\mathbb{Z}/m)[X_1,X_2]$ as in \Cref{thm:quartic-rep}, with
$R = \mathbb{Z}/m$. Then $\deg g \le 4$ and $g(j(x), j(y)) = j(\delta_{a,b}(x,y))$ for all
$x,y$, i.e.\ $j \circ \delta_{a,b} = g \circ j^2$.
\end{proof}

\section{No low-degree embedding of binary functions}\label{sec:obstruction}

\begin{lemma}[Nilpotent cancellation]\label{lem:nilpotent}
\lean{Finbin.nilpotent_cancel}\leanok
Let $R$ be a commutative ring. If $t \in R$ is nilpotent and $z\,(1+t) = 0$, then $z = 0$.
\end{lemma}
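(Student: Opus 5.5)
The plan is to show that $1+t$ is a unit in $R$ and then cancel it. Since $t$ is nilpotent, there is $N \ge 1$ with $t^N = 0$. The telescoping identity
\[
  (1+t)\sum_{i=0}^{N-1} (-t)^i = 1 - (-t)^N = 1 - (-1)^N t^N = 1
\]
exhibits $u = \sum_{i=0}^{N-1} (-t)^i$ as an inverse of $1+t$; commutativity of $R$ makes it a two-sided inverse. To verify the identity, expand the product and note that consecutive terms cancel. Equivalently, $-t$ is nilpotent because $(-t)^N = \pm t^N = 0$, and $1 - s$ is a unit whenever $s$ is nilpotent, by the geometric series.

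Given the inverse, the conclusion follows by multiplying $z(1+t) = 0$ on the right by $u$:
\[
  z = z\bigl((1+t)u\bigr) = \bigl(z(1+t)\bigr)u = 0 \cdot u = 0 .
\]

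No step is a genuine obstacle. The only point needing care is the case $N = 0$, which forces $1 = t^0 = 0$, so $R$ is the zero ring and $z = 0$ trivially. Otherwise the proof reduces to the standard fact that $1$ plus a nilpotent element is a unit, which in Lean is available directly as \texttt{IsNilpotent.isUnit\_one\_add}, or via \texttt{isUnit\_one\_add} together with cancellation by a unit.
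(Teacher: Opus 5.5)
Your proposal is correct and follows the paper's proof essentially verbatim: you take $u=\sum_{i=0}^{N-1}(-t)^i$ as an inverse of $1+t$ via the telescoping identity $(1+t)u = 1-(-t)^N = 1$, then multiply $z(1+t)=0$ on the right by $u$. Your separate treatment of $N=0$ is harmless but unnecessary; the paper simply takes the exponent $k\ge 1$, which is always possible since $t^0=0$ forces $t = t\cdot 1 = 0$.
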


\begin{proof}\leanok
As $t$ is nilpotent, $t^{k} = 0$ for some $k \ge 1$. Put $u = \sum_{i=0}^{k-1} (-t)^{i}$.
Then $t\,(-t)^{i} = -(-t)^{i+1}$, so the sum telescopes:
\[
  (1+t)\,u = u + t\,u
           = \sum_{i=0}^{k-1} (-t)^{i} - \sum_{i=0}^{k-1} (-t)^{i+1}
           = (-t)^{0} - (-t)^{k} = 1 - (-1)^{k} t^{k} = 1 .
\]
Hence $1+t$ is a unit with inverse $u$. Multiplying $z\,(1+t) = 0$ on the right by $u$ gives
$z = z\,(1+t)\,u = 0$.
\end{proof}

\begin{lemma}[Algebraic core]\label{lem:core}
\lean{Finbin.inv_graph_core}\leanok\uses{lem:nilpotent}
Let $R$ be a commutative ring and $c, e \in R$. If $c^2 e = c$ and $c$ is nilpotent, then
$c = 0$.
\end{lemma}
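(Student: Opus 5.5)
The plan is to rewrite the hypothesis $c^2 e = c$ in the form $z(1+t) = 0$ and then invoke \Cref{lem:nilpotent}. Moving everything to one side gives $c - c^2 e = 0$, that is,
\[
  c\,(1 - c e) = 0 .
\]
I will take $z = c$ and $t = -ce$, so that $1+t = 1 - ce$ and the hypothesis reads $z(1+t) = 0$ exactly.

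Next I must check that $t$ is nilpotent. Since $R$ is commutative, $(-ce)^k = (-1)^k c^k e^k$ for every $k$. If $c^k = 0$, this vanishes, so $t^k = 0$. Thus $t$ is nilpotent whenever $c$ is; more generally, a multiple of a nilpotent element in a commutative ring is nilpotent. \Cref{lem:nilpotent} then gives $z = 0$, i.e.\ $c = 0$.

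There is no real obstacle. The only point needing care is commutativity, which is used twice: to factor $c - c^2e$ as $c(1 - ce)$, and to compute $(ce)^k = c^k e^k$. Both steps are immediate in a commutative ring.

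As an alternative that avoids \Cref{lem:nilpotent}, one can iterate the hypothesis. From $c = c^2 e = c \cdot (ce)$, induction gives $c = c\,(ce)^k = c^{k+1} e^k$ for every $k$. Taking $k$ with $c^k = 0$ then yields $c = 0$ directly. I would present the first argument, since the paper has set up \Cref{lem:nilpotent} for exactly this purpose.
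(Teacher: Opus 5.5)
Your proposal is correct and is essentially the paper's argument: rewrite $c^2e = c$ as $c\,(1+t) = 0$ with $t = -ce$, note that $t$ is nilpotent because $c$ is, and apply \Cref{lem:nilpotent}. Your alternative iteration $c = c^{k+1}e^{k}$ is also valid, and it dispenses with the auxiliary lemma entirely.
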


\begin{proof}\leanok\uses{lem:nilpotent}
From $c^2 e = c$ we get $c\,(1 - ce) = 0$, i.e.\ $c\,(1 + t) = 0$ with $t = -ce$. As $c$ is
nilpotent so is $t$, and \Cref{lem:nilpotent} gives $c = 0$.
\end{proof}

\begin{lemma}[Rank-deficient determinant]\label{lem:rank}
\lean{Finbin.det_mul_eq_zero_of_card_lt}\leanok
If $A$ is an $n \times m$ matrix and $B$ an $m \times n$ matrix over a commutative ring with
$m < n$, then $\det(AB) = 0$.
\end{lemma}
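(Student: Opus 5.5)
The plan is to use the Cauchy--Binet formula. For $A$ an $n\times m$ and $B$ an $m\times n$ matrix over a commutative ring, it expresses $\det(AB)$ as a sum over all $n$-element subsets $S\subseteq\{1,\dots,m\}$ of $\det(A_{\cdot,S})\det(B_{S,\cdot})$. Since $m<n$, there are no such subsets, so the sum is empty and equals $0$.

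If Cauchy--Binet is not available in the form needed, I would give a self-contained argument by padding to square matrices.

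\emph{Padding.} Let $A'$ be the $n\times n$ matrix obtained from $A$ by appending $n-m$ zero columns. Let $B'$ be the $n\times n$ matrix obtained from $B$ by appending $n-m$ zero rows.

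\emph{Product is unchanged.} Entrywise,
\[
(A'B')_{ik}=\sum_{l\le m}A_{il}B_{lk}+\sum_{l>m}0\cdot 0=(AB)_{ik},
\]
so $A'B'=AB$.

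\emph{Conclusion.} Multiplicativity of the determinant over a commutative ring gives $\det(AB)=\det A'\,\det B'$. Because $m<n$, the matrix $A'$ has at least one zero column, so $\det A'=0$ by multilinearity in columns (equivalently, by Laplace expansion along that column). Hence $\det(AB)=0$.

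The only delicate point is that the ring $R$ is an arbitrary commutative ring, so no rank or field argument is available. Both routes avoid this: they use only $\det(XY)=\det X\det Y$ and the vanishing of a determinant with a zero column, and both hold over any commutative ring. In a formal setting, the remaining bookkeeping is reindexing $\mathrm{Fin}\,m\oplus\mathrm{Fin}(n-m)\cong\mathrm{Fin}\,n$ when building $A'$ and $B'$. I expect that step to be the main obstacle, and it is bookkeeping rather than mathematics.
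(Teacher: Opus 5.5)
Your proof is correct, and your padding route is genuinely different from the paper's. The paper works directly from the Leibniz formula. It expands $\prod_i \sum_l A_{i,l}B_{l,\sigma(i)}$ as a sum over all functions $\varphi\colon[n]\to[m]$ and pulls out $\prod_i A_{i,\varphi(i)}$. It then recognises the remaining signed sum over $\sigma$ as the determinant of the $n\times n$ matrix with rows $B_{\varphi(1),\cdot},\dots,B_{\varphi(n),\cdot}$. Every term dies by pigeonhole: since $m<n$, each $\varphi$ is non-injective, so that matrix has two equal rows. This computation is the first half of the standard proof of Cauchy--Binet. Your first route therefore uses the same underlying computation as the paper, but packaged as a citation. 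That is fine provided the version you cite covers $m<n$; some textbook statements assume $n\le m$ and treat $n>m$ separately by exactly this argument. Your padding argument factors $AB=A'B'$ through square matrices and needs only $\det(XY)=\det X\,\det Y$ plus the vanishing of a determinant with a zero column. It is conceptually shorter and avoids manipulating sums over functions. However, it rests on multiplicativity of $\det$, whose usual proof is the paper's expansion with $m=n$. Formally it also costs the reindexing $\mathrm{Fin}\,m\oplus\mathrm{Fin}(n-m)\cong\mathrm{Fin}\,n$, or an analogous equivalence for general finite index types. The paper's route, by contrast, is self-contained from the Leibniz formula and the alternating property of the determinant, and never changes index types.
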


\begin{proof}\leanok
Write the index sets as $[n]$ and $[m]$. By the Leibniz formula,
\[
  \det(AB) = \sum_{\sigma \in S_n} \operatorname{sgn}(\sigma) \prod_{i \in [n]} (AB)_{i,\sigma(i)}
           = \sum_{\sigma \in S_n} \operatorname{sgn}(\sigma) \prod_{i \in [n]}
             \sum_{l \in [m]} A_{i,l}\, B_{l,\sigma(i)} .
\]
Expanding the product over $i$ as a sum over functions $\varphi \colon [n] \to [m]$ and
exchanging the order of summation,
\[
  \det(AB) = \sum_{\varphi \colon [n] \to [m]} \Bigl(\prod_{i} A_{i,\varphi(i)}\Bigr)
             \sum_{\sigma \in S_n} \operatorname{sgn}(\sigma) \prod_{i} B_{\varphi(i),\sigma(i)}
           = \sum_{\varphi \colon [n] \to [m]} \Bigl(\prod_{i} A_{i,\varphi(i)}\Bigr)
             \det\bigl(B_{\varphi(i),k}\bigr)_{i,k},
\]
the inner sum being the Leibniz formula for the determinant of the $n \times n$ matrix with
$(i,k)$ entry $B_{\varphi(i),k}$. Because $m < n$, every $\varphi \colon [n] \to [m]$ is
non-injective: there are $i \ne i'$ with $\varphi(i) = \varphi(i')$, so that matrix has two
equal rows and its determinant is $0$. Hence every summand vanishes and $\det(AB) = 0$.
\end{proof}

\begin{theorem}[Inversion-graph obstruction]\label{thm:invp}
\lean{Finbin.thm_invp}\leanok\uses{def:inv,lem:core,lem:rank}
Let $p$ be prime, $R$ a commutative ring, $j \colon \mathbb{Z}/p \to R$ injective, and
$q \in R[X_1,X_2]$ with $\deg q \le p-1$. Then $q(j(x), j(y)) = j(\iota_p(x,y))$ cannot
hold for all $x, y \in \mathbb{Z}/p$.
\end{theorem}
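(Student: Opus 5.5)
The plan is to turn the hypothetical identity $q(j(x),j(y)) = j(\iota_p(x,y))$ into a determinant identity and play two computations of one determinant against each other: an exact one from the combinatorics of $\iota_p$, and one forced to vanish by \Cref{lem:rank}. The outcome should be that $c := j(1) - j(0)$ is nilpotent. A second relation of the form $c^2 e = c$ then lets \Cref{lem:core} give $c = 0$, contradicting injectivity of $j$. I have not verified either of the two key steps below; each says what I would try first.

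\textbf{Setup.} Suppose the identity holds and put $u = j(0)$, so that $c \neq 0$. Consider the $p\times p$ matrix $M_{x,y} = q(j x, j y)$, indexed by $x, y \in \mathbb{Z}/p$. By the definition of $\iota_p$ (\Cref{def:inv}), $M = uJ + cP'$. Here $J$ is the all-ones matrix, and $P'$ is the $0/1$ matrix with a $1$ exactly at the positions $(x, x^{-1})$ for $x \neq 0$; it has zero row and column at $0$ and is a permutation matrix on $(\mathbb{Z}/p)^\times$. Subtracting row $0$ (which is identically $u$) from every other row leaves row $0$ as $(u,\dots,u)$ and the other rows as $c$ times a permutation pattern. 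Expanding gives
\[
\det M = \pm\, u\, c^{p-1}.
\]
Applying the same computation to $q - u$ (equivalently, to the $(p-1)\times(p-1)$ block on $(\mathbb{Z}/p)^\times$) gives a determinant $\pm c^{p-1}$.

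\textbf{Factorisation step.} Next I would factor the relevant matrix through fewer than $p$ (respectively $p-1$) intermediate indices, so that \Cref{lem:rank} kills the determinant. Writing $q = \sum_{a=0}^{p-1} X^a h_a(Y)$ with $\deg h_a \le p-1-a$ factors $M = VH$ through only $p$ indices. This is not enough, and it is the main obstacle: the naive monomial split of a polynomial of total degree $\le p-1$ always yields inner dimension exactly $p$.

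To save one dimension I would exploit the total-degree bound more cleverly. Replace $X$ by $X - j(0)$ and $Y$ by $Y - j(0)$, and perform row and column differences against the $0$-th row and column. This removes the pure-$X$ and pure-$Y$ parts, divides off a factor, and leaves a factorisation of the differenced matrix through $p-1$ (on the full matrix) or $p-2$ (on the unit block) indices. The differenced matrix still has determinant a unit multiple of $c^{p-1}$, or of $u\,c^{p-1}$ times a power of $c$. If this works, \Cref{lem:rank} gives $c^{N} = 0$ for some $N$, so $c$ is nilpotent.

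\textbf{Closing step.} Second, I need $e$ with $c^2 e = c$. The natural source is the value structure of $q$ along a single row, say $x = 1$. The univariate polynomial $y \mapsto q(j(1), Y)$ takes the value $u + c$ at $j(1)$ and $u$ at the other $p-1$ points $j(y)$. Its differences $q(j1, j1) - q(j1, jy) = c$ can be written as $(j1 - jy)\cdot(\text{something})$. Combining this with the analogous relation along the column, and with the nilpotence just obtained, I would try to express $c$ as $c$ times a multiple of $c$, giving $c^2 e = c$. I expect that producing this $e$ will need the specific shape of the Step~2 factorisation, rather than following from generalities about $q$.

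Finally, \Cref{lem:core} forces $c = 0$, i.e.\ $j(1) = j(0)$, contradicting injectivity. If the dimension-saving in the factorisation step fails, the fallback is to apply \Cref{lem:rank} to a larger matrix built from evaluations at the $p^2$ points together with monomial vectors of degree $< p$, so that the count of monomials is strictly less than the number of independent rows that $\iota_p$ forces.
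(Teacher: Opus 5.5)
\textbf{Step 2.} Your factorisation step is sound and is the paper's argument written in a shifted basis. Write $q=\sum_{s,t}c'_{s,t}(X_1-j0)^s(X_2-j0)^t$, still with $c'_{s,t}=0$ for $s+t>p-1$. Then the mixed second difference $q(ju,jv)-q(j0,jv)-q(ju,j0)+q(j0,j0)$ equals $(ju-j0)(jv-j0)\sum_{s,t\ge1}c'_{s,t}(ju-j0)^{s-1}(jv-j0)^{t-1}$. On the unit block this gives $D(\Psi A'\Psi^{\mathsf T})D=cF$, where $D=\operatorname{diag}(ju-j0)$, $\Psi_{u,s}=(ju-j0)^{s-1}$ for $1\le s\le p-2$, $A'_{s,t}=c'_{s,t}$, and $F_{u,v}=[uv\equiv1]$. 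By \Cref{lem:rank} you then get $\pm c^{p-1}=0$. The paper does the same thing with $\nu(j0,ju,s)$ in place of $(ju-j0)^{s-1}$.

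Your parenthetical claim that the full $p\times p$ differenced matrix factors through $p-1$ indices is false in general: over a field with all $c'_{s,p-1-s}\neq0$, the full evaluation matrix is nonsingular. You only ever need the unit block, though.

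\textbf{The gap.} The gap is the closing step. You never produce $e$, and the route you sketch cannot produce it. A difference along the single row $x=1$, such as $c=q(j1,j1)-q(j1,j0)=(j1-j0)\,h=c\,h$, carries only one factor of $c$ and yields only $c(1-h)=0$. Here $h=c'_{0,1}+(\text{a multiple of }c)$ contains the pure-$X_2$ coefficient $c'_{0,1}$, so $1-h$ need not be a unit. Hence $c(1-h)=0$ alone cannot force $c=0$, even with $c$ nilpotent.

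\textbf{The fix.} What you need is a second factor of $c$, and only the mixed difference supplies it. It is already in your Step~2 identity: take its $(1,1)$ entry. There $D_{11}=j1-j0=c$ and $F_{11}=1$ (since $1\cdot1=1$), so the entry reads $c\,e\,c=c$ with $e=(\Psi A'\Psi^{\mathsf T})_{11}=\sum_{s,t\ge1}c'_{s,t}c^{s+t-2}$. Then \Cref{lem:core} gives $c=0$, contradicting injectivity. No nilpotence is needed to obtain $e$; it enters only inside \Cref{lem:core}.
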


\begin{proof}\leanok\uses{lem:core,lem:rank}
Suppose $q(j(x),j(y)) = j(\iota_p(x,y))$ for all $x,y \in \mathbb{Z}/p$, and write
$q = \sum_{s,t \ge 0} c_{s,t}\, X_1^{s} X_2^{t}$ with $c_{s,t} \in R$ and $c_{s,t} = 0$ when
$s+t > p-1$. Set $c = j(1) - j(0)$.

For $\beta, \alpha \in R$ and $s \ge 0$ put $\nu(\alpha,\beta,s) = \sum_{i=0}^{s-1}
\beta^{i}\alpha^{s-1-i}$, so $\beta^{s} - \alpha^{s} = (\beta-\alpha)\,\nu(\alpha,\beta,s)$ and
$\nu(\alpha,\beta,0) = 0$. For $a,b,c',d' \in R$, expanding each monomial gives the mixed
second difference
\[
  q(b,d') - q(a,d') - q(b,c') + q(a,c')
  = \sum_{s,t} c_{s,t}\,(b^{s}-a^{s})(d'^{\,t}-c'^{\,t}).
\]
Take $a = c' = j(0)$, $b = j(u)$, $d' = j(v)$ for units $u,v \in \{1,\dots,p-1\}$ of
$\mathbb{Z}/p$. The factors with $s = 0$ or $t = 0$ vanish, and $c_{s,t} = 0$ unless
$s+t \le p-1$, so $1 \le s,t \le p-2$; using the factorisation,
\begin{equation}\label{eq:mix}
  q(j u, j v) - q(j 0, j v) - q(j u, j 0) + q(j 0, j 0)
  = \bigl(j u - j 0\bigr)\bigl(j v - j 0\bigr)
    \sum_{s=1}^{p-2}\sum_{t=1}^{p-2} c_{s,t}\,\nu(j0, ju, s)\,\nu(j0, jv, t). \tag{$\dagger$}
\end{equation}
On the other hand $\iota_p(0,v) = \iota_p(u,0) = \iota_p(0,0) = 0$ and $\iota_p(u,v) =
[\,uv \equiv 1\,]$, so by hypothesis the left side of \eqref{eq:mix} equals
$j(\iota_p(u,v)) - j(0) = c\,[\,uv\equiv 1\,]$.

Index rows and columns of matrices over $R$ by the units $1,\dots,p-1$. Define
\[
  D = \operatorname{diag}\bigl(j u - j 0\bigr), \quad
  \Phi_{u,s} = \nu(j0, ju, s)\ (1 \le s \le p-2), \quad
  A_{s,t} = c_{s,t}, \quad
  F_{u,v} = [\,uv \equiv 1 \bmod p\,].
\]
Then \eqref{eq:mix} and the value just computed say exactly
\[
  D\,(\Phi A \Phi^{\mathsf T})\,D = c\,F ,
\]
an identity of $(p-1)\times(p-1)$ matrices. Here $\Phi$ is $(p-1)\times(p-2)$ and
$A\Phi^{\mathsf T}$ is $(p-2)\times(p-1)$, so by \Cref{lem:rank},
$\det(\Phi A \Phi^{\mathsf T}) = \det\bigl(\Phi\,(A\Phi^{\mathsf T})\bigr) = 0$; hence
$\det\bigl(D(\Phi A\Phi^{\mathsf T})D\bigr) = (\det D)^2 \det(\Phi A\Phi^{\mathsf T}) = 0$.
Since $u \mapsto u^{-1}$ is a permutation of $(\mathbb{Z}/p)^\times$, $F$ is its permutation
matrix and $\det F = \pm 1$, a unit. Taking determinants in $D(\Phi A\Phi^{\mathsf T})D = cF$
gives $c^{\,p-1}\det F = 0$, so $c^{\,p-1} = 0$ and $c$ is nilpotent.

Finally take $u = v = 1$. Then $uv = 1$, $j 1 - j 0 = c$, and the $(1,1)$ entry of the
identity reads $c^2 e = c$, where $e = \sum_{s,t} c_{s,t}\,\nu(j0,j1,s)\,\nu(j0,j1,t) \in R$.
By \Cref{lem:core}, $c = 0$, i.e.\ $j(1) = j(0)$; as $1 \ne 0$ in $\mathbb{Z}/p$ this
contradicts injectivity of $j$.
\end{proof}

\begin{theorem}[Non-embeddability of the inversion indicator]\label{thm:obstruction}
\lean{Finbin.not_embedsInDegree_invIndicator}\leanok\uses{def:embed,def:inv,thm:invp}
For every prime $p$, the inversion indicator $\iota_p$ does not embed in degree $p-1$.
\end{theorem}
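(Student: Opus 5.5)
This is a direct unpacking of \Cref{def:embed} followed by an application of \Cref{thm:invp}; the two statements should line up exactly.

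I argue by contradiction. Suppose $\iota_p$ embeds in degree $p-1$. Take $X = \mathbb{Z}/p$ and $k = 2$ in \Cref{def:embed}. This gives a commutative ring $R$, an injection $j \colon \mathbb{Z}/p \to R$, and a polynomial $g \in R[X_1,X_2]$ with $\deg g \le p-1$ and $j \circ \iota_p = g \circ j^2$.

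Evaluating the commuting square at an arbitrary pair $(x,y) \in (\mathbb{Z}/p)^2$ gives
\[
  g(j(x), j(y)) = j(\iota_p(x,y)) \quad \text{for all } x, y \in \mathbb{Z}/p,
\]
by the definition of $j^2$ as the coordinatewise extension. This is precisely the identity that \Cref{thm:invp} forbids when $q = g$. Its hypotheses hold: $p$ is prime, $R$ is commutative, $j$ is injective, and $\deg g \le p-1$. The resulting contradiction shows that no such $(R, j, g)$ exists.

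The only point needing care is bookkeeping: the data in \Cref{def:embed} must match the data of \Cref{thm:invp}. Concretely, the polynomial $g$ is evaluated as a function $R^2 \to R$, and the degree bound refers to total degree in both results. There is no substantive obstacle, since all the mathematical content is already in \Cref{thm:invp}.
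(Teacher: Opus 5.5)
Your proposal is correct and follows the paper's proof exactly: unpacking \Cref{def:embed} with $X = \mathbb{Z}/p$ and $k = 2$ yields $R$, an injective $j$, and $g$ with $\deg g \le p-1$ and $g(j(x), j(y)) = j(\iota_p(x,y))$ for all $x, y$. This is precisely what \Cref{thm:invp} rules out.
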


\begin{proof}\leanok\uses{thm:invp}
An embedding in degree $p-1$ supplies a ring $R$, an injection $j$, and $q$ with
$\deg q \le p-1$ and $q(j(x),j(y)) = j(\iota_p(x,y))$ for all $x,y$, contradicting
\Cref{thm:invp}.
\end{proof}

\begin{corollary}[Arbitrary degree is required]\label{cor:arbitrary}
\lean{Finbin.arbitrary_degree_obstruction}\leanok\uses{thm:obstruction}
For every $d$ there is a binary function over a finite ring that embeds in no polynomial of
total degree $d$. Concretely, for any prime $p > d$ the inversion indicator $\iota_p$ is
such a function.
\end{corollary}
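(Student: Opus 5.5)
The plan is to derive the corollary directly from \Cref{thm:obstruction}, using \Cref{lem:mono} to pass from degree $d$ up to degree $p-1$. Fix $d$. First choose a prime $p > d$, which exists because there are infinitely many primes (for instance, any prime factor of $d!+1$ exceeds $d$). The function $\iota_p \colon (\mathbb{Z}/p)^2 \to \mathbb{Z}/p$ is a binary function on the finite ring $\mathbb{Z}/p$, so it is a candidate of the required kind.

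Next, suppose for contradiction that $\iota_p$ embeds in degree $d$. Since $p > d$ we have $d \le p-1$, so \Cref{lem:mono} shows that $\iota_p$ also embeds in degree $p-1$: the same ring $R$, injection $j$ and polynomial $g$ serve as witnesses. This contradicts \Cref{thm:obstruction}, which says $\iota_p$ does not embed in degree $p-1$. Hence $\iota_p$ does not embed in degree $d$. This holds for every prime $p > d$, which gives the ``concretely'' clause as well.

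I expect no step to be a genuine obstacle, since all the algebraic work is already done in \Cref{thm:invp}. The only point that needs care is to make the degree comparison explicit. ``Embeds in no polynomial of total degree $d$'' must be read as ``does not embed in degree $d$'' in the sense of \Cref{def:embed}, which means total degree at most $d$. The inequality $p > d$ must then be converted to $d \le p-1$ before applying monotonicity. Without the ``at most'' reading, the reduction to degree $p-1$ via \Cref{lem:mono} would not go through.
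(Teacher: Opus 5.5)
Your proof is correct and follows the paper's argument: choose a prime $p > d$, use \Cref{lem:mono} to lift a hypothetical degree-$d$ embedding of $\iota_p$ to degree $p-1$, and contradict \Cref{thm:obstruction}. Your existence argument via a prime factor of $d!+1$ simply spells out the paper's remark that the primes are unbounded.
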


\begin{proof}\leanok\uses{thm:obstruction,lem:mono}
Choose a prime $p \ge d+1$ (the primes are unbounded). Then $d \le p-1$, so by
\Cref{lem:mono} an embedding of $\iota_p$ in degree $d$ gives one in degree $p-1$,
contradicting \Cref{thm:obstruction}.
\end{proof}

\section{Conclusion}

The constructions of \Cref{sec:unary,sec:quartic} are over $\mathbb{Z}/n$; the lower bound
of \Cref{sec:obstruction} holds over every commutative ring. Two questions are left open:
the minimal degree in which a given binary function embeds, and the corresponding statements
for arity $k > 2$.

\end{document}